\newtheorem{theorem}{Theorem}[section]
\newtheorem{definition}[theorem]{Definition}
\newtheorem{proposition}[theorem]{Proposition}
\theoremstyle{remark}
\newtheorem{example}[theorem]{Example}
\newcommand{\ZZ}{\mathbb{Z}}
\newcommand{\RR}{\mathbb{R}}
\newcommand{\CC}{\mathbb{C}}
\newcommand{\res}{\operatorname{res}}
\DeclareMathOperator{\GL}{GL}
\DeclareMathOperator{\PGL}{PGL}
\DeclareMathOperator{\PGmL}{P\Gamma L}
\DeclareMathOperator{\GmL}{\Gamma L}
\DeclareMathOperator{\chr}{char}
\DeclareMathOperator{\Aut}{Aut}
\DeclareMathOperator{\Hom}{Hom}
\DeclareMathOperator{\Gal}{Gal}
\DeclareMathOperator{\id}{id}
\DeclareMathOperator{\conj}{conj}
\DeclareMathOperator{\tra}{tra}
\begin{document}

\title[Semi-Projective Representations and Twisted Representation Groups]{Semi-Projective Representations and Twisted Representation Groups}

\author{Massimiliano Alessandro, Christian Gleissner and Julia Kotonski}

\thanks{
\textit{2020 Mathematics Subject Classification.} 20C25\\
\textit{Keywords}: projective representations, representation groups, Schur multiplier, semi-linear maps, complex torus quotients\\
\textit{Acknowledgements:} The authors would like to thank Andreas Demleitner and Himanshu Shukla for useful comments and discussions. Moreover, they thank an anonymous referee for a plethora of helpful suggestions to improve the exposition and the proof of Proposition~\ref{Stem}. The first author was partially supported by PRIN 2020KKWT53 003 - Progetto \emph{Curves, Ricci flat Varieties and their Interactions}.}
 
\address{Massimiliano Alessandro \newline  Universit\`a degli Studi di Genova, DIMA Dipartimento di Matematica, I-16146 Genova, Italy\newline University of Bayreuth, Universit\"atsstr. 30, D-95447 Bayreuth, Germany}
\email{alessandro@dima.unige.it, massimiliano.alessandro@uni-bayreuth.de}

\address{Christian Gleissner \newline University of Bayreuth, Universit\"atsstr. 30, D-95447 Bayreuth, Germany}
\email{christian.gleissner@uni-bayreuth.de}

\address{Julia Kotonski \newline University of Bayreuth, Universit\"atsstr. 30, D-95447 Bayreuth, Germany}
\email{julia.kotonski@uni-bayreuth.de}

\begin{abstract}
    A semi-projective representation is a homomorphism of a finite group into the group of semi-projective transformations of a finite dimensional vector space over a field. Schur's concept of a representation group for projective representations is extended to semi-projective representations under the assumption that the field is algebraically closed. A computer algorithm is given that produces, for a given finite group, all such \emph{twisted} representation groups under trivial or conjugation actions on the field of complex numbers.
\end{abstract}

\maketitle

\section{Introduction}
Throughout this paper $K$, $V$ and $G$ will denote a field, a non-trivial finite dimensional $K$-vector space and a finite group respectively.
In  \cite{S04},
Schur developed the theory of projective representations, which are homomorphisms from a group $G$ to the group of projective transformations 
$\PGL(V)$. We consider semi-projective representations of $G$, these are homomorphisms from $G$ to the group of semi-projective transformations $\PGmL(V)$. 
Here,  $\PGmL(V)$ is defined as the quotient of the group of semi-linearities $$\GmL(V)\cong \GL(V)\rtimes \Aut(K)$$  modulo the action of the multiplicative group $K^{\ast}$. 
A  semi-projective representation involves  an action $\varphi$ of $G$ on $K$ by  automorphisms.  In this way, $K^{\ast}$ becomes a $G$-module and one can consider
  the second cohomology group $H_{\varphi}^2(G,K^{\ast})$ with respect to the action. 
In analogy to the projective case, this group plays an important role as it is the  obstruction space of  the lifting  problem of semi-projective representations to semi-linear representations, i.e., homomorphisms 
from $G$ to $\GmL(V)$.

As our main result, we show  that if $K$ is algebraically closed, then  for any given action $\varphi$ of $G$, there exists a finite $\varphi$-twisted representation group $\Gamma$, which has the property that  any semi-projective representation 
inducing the action $\varphi$ admits  a semi-linear lift to $\Gamma$. Despite the fact that $\Gamma$ is in general  not unique, it has minimal order among all groups enjoying the lifting  property. This allows us to study semi-projective representations of $G$ via semi-linear representations of $\Gamma$. 
We also  give a cohomological characterization of a group $\Gamma$ to be a $\varphi$-twisted representation group, which reduces to the classical description of a representation group in the case that  the action $\varphi$ is trivial. 
In general, it seems to be difficult to determine explicitly a $\varphi$-twisted representation group, even when $\varphi$ is trivial. 
We approach this problem in the semi-projective case via an algorithm for the case 
$K=\CC$ under the assumption that $\varphi$ takes values in $\Gal(\CC/\RR)$; this produces all $\varphi$-twisted representation groups of a given group $G$.\\
There are situations where semi-projective representations arise naturally, for example in \emph{Clifford theory}:  
Isaacs  \cite{I81} developed the concept 
of crossed-projective representations,  which is analogous to our notion of semi-projective representations, in order  to study the problem of extending 
 $G$-invariant irreducible $K$-representations of a normal subgroup of $G$ to  $G$ for $K$ arbitrary.\\
 A second situation occurred in a recent work of Demleitner and the second author (see \cite{DG22}), where they constructed certain quotients of complex tori by free holomorphic actions of finite groups  and  investigated
their homeomorphism and biholomorphism classes. Homeomorphisms and biholomorphisms of such quotients are induced by affine transformations. When determining  the linear parts of these transformations, a homomorphism from a finite group to $\PGL(n,\CC)\rtimes\Gal(\CC/\RR)$ arose. Moreover, they  had to 
determine a particular kind of lift of this map to $\GL(n,\CC)\rtimes \Gal(\CC/\RR)$. This example served for us as a motivation to extend Schur's theory to semi-projective representations.\\

In Section~\ref{S2}, we introduce semi-projective representations and discuss their interplay with group cohomology. We  state the lifting problem and give a cohomological criterion 
 for a semi-projective representation of $G$  to lift to a semi-linear representation of an extension of 
 $G$ by a finite abelian group $A$. 
 In Section \ref{S3}, we construct a 
 $\varphi$-twisted representation group for any given $G$ together with an action $\varphi$ on an algebraically closed field $K$. For this purpose, we adapt the construction of a representation group in the projective case (see \cite[Chapter~11]{I94}) to our setup. Then we give a cohomological characterization of a $\varphi$-twisted representation group 
  and show that it coincides with the classical notion in the case that $\varphi$ is the trivial action. Section \ref{ExSection} is devoted to basic examples of semi-projective representations and twisted representation groups. Moreover, an algorithm is developed which allows to determine all $\varphi$-twisted representation groups of a given $G$ under the assumption that  $K=\CC$ and $\varphi$ maps to  $\Gal(\CC/\RR)$. Running a MAGMA implementation, we determine the $\varphi$-twisted representation  groups of  $\mathcal D_4$ for all possible actions $\varphi$.

\section{Preliminaries}\label{S2}

 In this section, we introduce  semi-linear and semi-projective  representations and discuss some of their basic properties. 

\begin{definition}
A bijective map $f\colon V \to V$ is called a \emph{semi-linear transformation} if there exists an automorphism $\varphi_f \in \Aut(K)$ such that 
\[
 f(v+w)=f(v)+f(w) \qquad \makebox{and} \qquad 
f(\lambda v)= \varphi_f(\lambda) f(v)
\]
 for all $v, w \in V$ and all $\lambda \in K$.
The set of all semi-linear transformations of $V$ forms a group $\GmL(V)$. 
\end{definition}

The group $\GmL(V)$  contains $\GL(V)$ as a normal subgroup and sits inside the following short exact sequence 
\[
1 \longrightarrow \GL(V) \longrightarrow \GmL(V) \longrightarrow \Aut(K) \longrightarrow 1. 
\]
This sequence  splits, so that $\GmL(V) \cong   \GL(V) \rtimes \Aut(K)$.\\
Let $v_1, \ldots, v_n$ be a basis of $V$. Then we can associate to every $f \in \GmL(V) $ an invertible  matrix $A_f:=(a_{ij})$ by  
\[
f(v_j)= \sum_{i=1}^n a_{ij}v_i. 
\]
This procedure establishes an isomorphism between $\GmL(V)$ and the semidirect product \makebox{$\GL(n,K) \rtimes \Aut(K)$} with group operation 
\[
(A, \varphi) \cdot (B, \psi) := (A \varphi(B), \varphi \circ \psi). 
\]
Here, $\varphi(B)$ is the matrix obtained by applying  the automorphism $\varphi$ to the  entries of $B$.\\

In analogy to  the group of \emph{projective transformations} $\PGL(V)$,  the group of \emph{semi-projective transformations} $\PGmL(V)$ is defined  as the quotient of 
 $\GmL(V)$ modulo the equivalence relation 
\[
		f \sim g \quad  \makebox{if and only if there exists  $\lambda \in K^*$, such that}  \quad    f=\lambda g. 
\]
By construction, the sequence 
\[
1 \longrightarrow K^{\ast} \longrightarrow \GmL(V) \longrightarrow \PGmL(V) \longrightarrow 1
\]
is exact. The structure of $\PGmL(V)$ is similar to the one of $\GmL(V)$: the group $\PGL(V)$ is a normal subgroup of $\PGmL(V)$, and there is a split  exact sequence 
\[
1 \longrightarrow \PGL(V) \longrightarrow \PGmL(V) \longrightarrow \Aut(K) \longrightarrow 1. 
\]
Note  that the map $\PGmL(V) \to \Aut(K)$ is well-defined  because the automorphism attached to an element in $\PGmL(V)$ is independent  of the representative. 
After choosing a \emph{projective frame}, $\PGmL(V)$ can be identified with the semidirect product  
\[
\PGL(n,K) \rtimes \Aut(K).
\]

 We now introduce our main objects of study: 

\begin{definition}
A \emph{semi-linear representation}  is a homomorphism 
$F \colon G \to \GmL(V)$ and a \emph{semi-projective representation}  is a homomorphism 
$f \colon G \to \PGmL(V)$.  
\end{definition}

Isaacs \cite{I81} used the term \emph{crossed-projective representation} for a semi-projective representation under the identification of $\PGL(n,K)\rtimes\Aut(K)$ with $\PGmL(V)$.\\

The lifting problem: Every semi-linear representation $F \colon G \to \GmL(V)$ induces a semi-projective representation $f \colon G \to \PGmL(V)$  by composition with the quotient map:
	\[
	\begin{tikzcd}
		\GmL(V)\arrow{r} & \PGmL(V)\\
		G \arrow{u}{F}\ar[ur, "f"']
	\end{tikzcd}
	\]
However, it is not true that every semi-projective representation can be obtained in this way. 
The obstruction to  the existence of a lift to  $\GmL(V)$, or more generally, the interplay between 
semi-linear and semi-projective representations,   can be  described by using  \emph{group cohomology} in analogy  to the 
classical theory of projective representations.\\
Given a semi-linear or semi-projective representation of $G$, we obtain an action 
\[
\varphi\colon G \to \Aut(K), \quad  g \mapsto \varphi_g, 
\]
by composition with the projection from $\GmL(V)$ or $\PGmL(V)$ to $\Aut(K)$, respectively. 
Via this  action, the abelian 
group $K^{\ast}$ obtains  a $G$-module structure;
 in particular, we can define the group of cocycles $Z_\varphi^i(G,K^{\ast})$, coboundaries $B_\varphi^{i}(G,K^{\ast})$ and  the cohomology groups 
\[
H_\varphi^i(G,K^{\ast})= Z_\varphi^i(G,K^{\ast})/B_\varphi^{i}(G,K^{\ast}).
\]
Here, the subscript $\varphi$ is used to emphasize that the $G$-module structure of $K^\ast$ is in general not trivial, and might be dropped  in the trivial case.
For details on group cohomology, see \cite{B94}.\\
The basic observation is that we can associate to every  semi-projective representation a well-defined class in the second cohomology group:  

\begin{proposition}
Let $f\colon G \to  \PGmL(V)$ be a semi-projective representation and let $f_g$ be a representative of the class $f(g)$ for each $g \in G$. Then there exists a map $\alpha \colon G \times G \to K^{\ast}$ such that $f_{gh}= \alpha(g,h) (f_g \circ  f_h)$ for all $g,h \in G$. 
The map $\alpha$  is a $2$-cocycle, i.e., 
\[
\alpha(g,hk)\varphi_g\big(\alpha(h,k)\big)=\alpha(g,h)\alpha(gh,k).  
\]
The cohomology class $[\alpha] \in H_\varphi^2(G,K^{\ast})$ is independent of the chosen representatives $f_g$. 
\end{proposition}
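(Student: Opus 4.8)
The plan is to establish the three assertions in turn: existence of $\alpha$, the cocycle identity, and independence of the cohomology class from the choice of representatives.

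First, for each $g \in G$ fix a representative $f_g \in \GmL(V)$ of the class $f(g) \in \PGmL(V)$. Since $f$ is a homomorphism, $f(gh) = f(g)f(h)$ in $\PGmL(V)$, so $f_{gh}$ and $f_g \circ f_h$ represent the same class; by the definition of the equivalence relation defining $\PGmL(V)$, there exists $\alpha(g,h) \in K^{\ast}$ with $f_{gh} = \alpha(g,h)\cdot(f_g \circ f_h)$. This defines the map $\alpha \colon G \times G \to K^{\ast}$. I should note here that the scalar $\alpha(g,h)$ is \emph{uniquely} determined, because a semi-linear transformation that is simultaneously equal to $\lambda \cdot \mathrm{id}$ and $\mu \cdot \mathrm{id}$ forces $\lambda = \mu$ (evaluate on a nonzero vector). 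This uniqueness is what makes the subsequent computations well-posed.

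Next, the cocycle identity. The idea is to compute $f_{ghk}$ in two ways using associativity of composition in $\GmL(V)$. On one hand, $f_{(gh)k} = \alpha(gh,k)\cdot(f_{gh}\circ f_k) = \alpha(gh,k)\cdot\big((\alpha(g,h) f_g f_h)\circ f_k\big)$. On the other hand, $f_{g(hk)} = \alpha(g,hk)\cdot(f_g \circ f_{hk}) = \alpha(g,hk)\cdot\big(f_g\circ(\alpha(h,k) f_h f_k)\big)$. The one subtlety — and the step I expect to be the main obstacle, though it is really just bookkeeping — is that pulling the scalar $\alpha(h,k)$ out past $f_g$ on the right-hand side is \emph{not} free: by the defining property of a semi-linear transformation, $f_g\big(\alpha(h,k)\,w\big) = \varphi_g\big(\alpha(h,k)\big)\,f_g(w)$, where $\varphi_g = \varphi_{f_g} \in \Aut(K)$ is the automorphism attached to $f_g$ (and which depends only on the class $f(g)$, hence is exactly the value of the induced action $\varphi$). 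Carrying out this twist, the right-hand side becomes $\varphi_g\big(\alpha(h,k)\big)\cdot\alpha(g,hk)\cdot(f_g f_h f_k)$, while the left-hand side is $\alpha(gh,k)\cdot\alpha(g,h)\cdot(f_g f_h f_k)$. Since $f_g f_h f_k$ is a fixed semi-linear transformation, uniqueness of the scalar gives
\[
\varphi_g\big(\alpha(h,k)\big)\cdot\alpha(g,hk)\cdot\alpha(g,h)^{-1} = \alpha(gh,k),
\]
which rearranges to the stated identity $\varphi_g\big(\alpha(h,k)\big)\cdot \alpha(gh,k)^{-1}\cdot \alpha(g,hk)\cdot\alpha(g,h)^{-1}=1$. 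This is precisely the $2$-cocycle condition for the $G$-module $K^{\ast}$ with the action $\varphi$ induced by $f$.

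Finally, independence of $[\alpha]$ from the choice of representatives. Let $f_g' = \beta(g)\cdot f_g$ be another system of representatives, with $\beta \colon G \to K^{\ast}$ an arbitrary function; note every alternative system arises this way, again by the definition of $\PGmL(V)$. Computing the cocycle $\alpha'$ attached to $(f_g')$: from $f_{gh}' = \beta(gh) f_{gh} = \beta(gh)\alpha(g,h)(f_g f_h)$ and $f_g'\circ f_h' = \beta(g)f_g\circ\beta(h)f_h = \beta(g)\varphi_g(\beta(h))(f_g f_h)$ — using the same twist as above to move $\beta(h)$ past $f_g$ — uniqueness of the scalar yields
\[
\alpha'(g,h) = \alpha(g,h)\cdot\beta(gh)\cdot\beta(g)^{-1}\cdot\varphi_g\big(\beta(h)\big)^{-1}.
\]
The correction term $g,h \mapsto \varphi_g(\beta(h))\cdot\beta(gh)^{-1}\cdot\beta(g)$ is exactly a $2$-coboundary in $B^2(G,K^{\ast})$ for the action $\varphi$, so $\alpha'$ and $\alpha$ differ by a coboundary and hence $[\alpha'] = [\alpha]$ in $H^2(G,K^{\ast})$. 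This completes the proof.
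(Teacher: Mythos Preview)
Your proof is correct and follows essentially the same approach as the paper: establish $\alpha$ from the homomorphism property, verify the cocycle identity by computing $f_{ghk}$ two ways via associativity (with the semi-linearity twist $f_g(\lambda\,\cdot)=\varphi_g(\lambda)f_g(\cdot)$), and check independence by showing a change of representatives alters $\alpha$ by a coboundary. Your added remarks on uniqueness of the scalar are a helpful elaboration but do not change the argument.
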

\begin{proof}
Since $f$ is a homomorphism, $[f_{gh}]=[f_g] \circ [f_h]$, which implies that $f_{gh}=\alpha(g,h)(f_g \circ f_h)$  for $\alpha(g,h)\in K^{\ast}$.  Now 
\begin{align*}
f_{g(hk)}= \alpha(g,hk)  (f_g \circ  f_{hk}) &= \alpha(g,hk)  (f_g \circ  \alpha(h,k)  (f_h \circ  f_k)) \\
& = \alpha(g,hk)  \varphi_g(\alpha(h,k))  ( f_g \circ f_h \circ  f_k).
\end{align*}  
On the other hand,
\[
f_{(gh)k}= \alpha(gh,k)  (f_{gh} \circ  f_{k})= \alpha(gh,k)  \alpha(g,h)  (f_g \circ  f_h \circ  f_k). 
\]
Comparing the two expressions yields 
\[
\alpha(g,hk)  \varphi_g(\alpha(h,k))=\alpha(gh,k)  \alpha(g,h).
\]
Let $f'_g$ be another representative for $f(g)$, then there exists $\tau(g) \in K^{\ast}$ such that 
$f_g=\tau(g)f'_g$. Let $\alpha'$ be defined by $f'_{gh}= \alpha'(g,h)  (f'_g \circ  f'_h)$ for all $g,h \in G$.
A  computation as above shows that 
\[
\alpha'(g,h)=\varphi_g(\tau(h))  \tau(gh)^{-1}  \tau(g)  \alpha(g,h),
\]
where
$\partial \tau(g,h) =\varphi_g(\tau(h))  \tau(gh)^{-1}  \tau(g)$ is a $2$-coboundary.
\end{proof}
Let  $f\colon G \to  \PGmL(V)$ be a semi-projective representation.
     Choosing $\id_V$ as a representative for $f(1)$, then the 
     2-cocycle $\alpha$ is \emph{normalized}, that is, $\alpha(1,g)=\alpha(g,1)=1$ for all $g\in G$. If $f$ is induced by a semi-linear representation $F$, then the attached cohomology class is trivial. Conversely, assume that $\alpha$ is a coboundary, that is $     \alpha(g,h)=
     \varphi_g(\tau(h))  \tau(gh)^{-1}  \tau(g)$ for some function $\tau \colon G \to K^{\ast}$. Then the map 
     \[
    F\colon G \to \GmL(V), \quad g \mapsto F_g:=\tau(g)f_g
     \]
     is a semi-linear representation inducing $f$, as the following computation shows: 
     \begin{align*}
F_g \circ F_h = (\tau(g)  f_g)\circ (\tau(h)  f_h)&=  \tau(g)  \varphi_g(\tau(h))  (f_g\circ f_h) \\
& = \tau(gh)  \alpha(g,h)    (f_g \circ f_h) \\
& =\tau(gh)  f_{gh} =F_{gh}. 
\end{align*} 
We have assigned  to every  semi-projective representation an element in $H_\varphi^2(G, K^{\ast})$. In fact, all cohomology classes arise in this way:
let  $\varphi  \colon G \to  \Aut(K)$ be an action of $G$ of order $n$ on $K$ and $\alpha\in Z_\varphi^2(G, K^{\ast})$.  In analogy to the \emph{regular representation}, 
consider  the vector space $V$ with basis  $\lbrace e_{h} ~\vert ~h\in G \rbrace$ and define 
for every $g\in G$ an element $R_g \in \GL(V)$ via $R_g(e_h):= \alpha(g,h)^{-1}e_{gh}$. Then the map 
\[
f \colon G \to \PGL(V) \rtimes \Aut(K), \quad g \mapsto ([R_g],\varphi_g),
\]
is a semi-projective representation with assigned  cohomology class $[\alpha] \in H_\varphi^2(G, K^{\ast})$. 

We have thus shown that if 
$H_\varphi^2(G, K^{\ast})$ is non-trivial, then there are semi-projective representations without a semi-linear lift.
In the projective case, this problem  was first noticed and investigated by Schur.
In order to study projective representations by means of  ordinary linear representations, 
he and subsequent authors, under certain conditions on $K$, constructed a representation group $\Gamma$ of $G$: in modern terminology, a \emph{stem extension} 
\[
1 \longrightarrow A \longrightarrow \Gamma \longrightarrow G \longrightarrow 1, \qquad \makebox{with} \:  A \cong H^2(G,K^{\ast}).
\]
\emph{Stem} means that $A$ is central and contained in the commutator subgroup $[\Gamma,\Gamma]$.
Such an extension has the property that for every projective representation  $f \colon G \to \PGL(V)$ (see \cite[Chapter~3.3]{K85} for details), there exists an ordinary linear representation $F \colon \Gamma \to \GL(V)$ fitting into the following commutative diagram
	\[
	\begin{tikzcd}
		1\arrow{r} & A \arrow{r}\arrow{d} & \Gamma \arrow{r}\arrow{d}{F} & G\arrow{r}\arrow{d}{f} & 1\\
		1\arrow{r} & K^\ast \arrow{r} & \GL(V)\arrow{r} & \PGL(V)\arrow{r} & 1
	\end{tikzcd}
	\]
In this scenario, we say that \emph{$F$ induces $f$} or that \emph{$f$ can be lifted to $F$} and use similar terminology in the semi-projective case below.\\

Here are some facts about group extensions: let $1\to A \to \Gamma \to G \to 1$ be an extension of $G$ by a finite abelian group $A$ and $s\colon G\to \Gamma$ a set-theoretic section. There is an  action of $G$ on $A$ defined by 
$g\ast a:=s(g) a  s(g)^{-1}$. 
Since $A$ is abelian, the action is independent of the choice of the section. Now, $s(gh)= \beta(g,h)s(g)s(h)$ for some $\beta(g,h) \in A$ leading to a map $\beta\colon G \times G \to A$, which is a  2-cocycle since  
\[
(g \ast\beta(h,k)) \beta(g,hk)=\beta(gh,k)\beta(g,h). 
\]
A different choice of section $s'\colon G \to \Gamma$ yields a cohomologous cocycle $\beta'$. Hence, we can associate to the given extension a unique cohomology class in $H_\varphi^2(G,A)$. Suppose now that an action $\varphi  \colon G \to  \Aut(K)$ on the field $K$ is given. Then, by composition with the projection $\Gamma \to G$, we also obtain an action of $\Gamma$ on $K$ with kernel containing $A$.
In this situation, the \emph{inflation-restriction exact sequence} of Hochschild and Serre \cite[Theorem~2]{HoSe53} reads:
\[
    \begin{tikzcd}
        1 \arrow{r} & H_\varphi^1(G,K^{\ast})\arrow{r}{\inf} & H_\varphi^1(\Gamma,K^{\ast})\arrow{r}{\res} & \Hom_G(A,K^{\ast})\arrow{r}{\tra} & H_\varphi^2(G,K^{\ast})\arrow{r}{\inf} & H_\varphi^2(\Gamma,K^{\ast})
    \end{tikzcd}
\]
Here, \emph{inf} and \emph{res} are induced by inflation and restriction of cocycles and the \emph{transgression} map \emph{tra} is defined as 
\[
\tra \colon \Hom_G(A,K^{\ast}) \to H_\varphi^2(G,K^{\ast}), \quad \lambda \mapsto [\lambda \circ \beta].
\]
Clearly,  this map depends only on the cohomology class of $\beta$. Using this terminology, we obtain:

\begin{theorem}\label{Lifting}
Let $1\to A \to \Gamma \overset{\pi}{\to} G \to 1$ be an extension of $G$ by a finite abelian group $A$ with associated cohomology class 
$[\beta] \in H_\varphi^2(G,A)$.  
A semi-projective representation  $f \colon G \to P\Gamma L(V)$  with class  $[\alpha] \in H_\varphi^2(G,K^{\ast})$ 
 is induced by a semi-linear representation 
 \[
 F \colon \Gamma \to \Gamma L(V), \quad \gamma \mapsto F_{\gamma},
\] 
 if and only if 
$[\alpha]$ belongs to the image of the transgression map. \end{theorem}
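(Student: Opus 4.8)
The statement is a biconditional, so the plan is to prove each implication separately, and in both directions the key is to produce an explicit section-and-cocycle bookkeeping that converts a semi-linear lift on $\Gamma$ into a factorization of $[\alpha]$ through $\beta$, and vice versa. Fix a set-theoretic section $s\colon G\to\Gamma$ normalized so that $s(1)=1$, write $s(gh)=\beta(g,h)s(g)s(h)$ with $\beta\in Z^2(G,A)$ representing the given class, and fix representatives $f_g\in\GmL(V)$ of $f(g)$ with $f_1=\id_V$ and $f_{gh}=\alpha(g,h)\,(f_g\circ f_h)$ as in the Proposition. Throughout, recall that the action $\varphi$ of $G$ on $K^\ast$ is the one attached to $f$, and that it is pulled back along $\pi$ to an action of $\Gamma$ on $K^\ast$ with $A$ acting trivially, so that $\Hom_G(A,K^\ast)$ makes sense and the inflation–restriction sequence of the previous remark applies.

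\textbf{($\Leftarrow$) Constructing the lift.}
Suppose $[\alpha]=\tra(\lambda)$ for some $\lambda\in\Hom_G(A,K^\ast)$, i.e.\ $[\alpha]=[\lambda\circ\beta]$ in $H^2(G,K^\ast)$. After modifying the $f_g$ by a $1$-cochain $\tau\colon G\to K^\ast$ as in the Proposition (which changes $\alpha$ by the coboundary $\partial\tau$ but not the class), we may assume outright that $\alpha=\lambda\circ\beta$ as cocycles. Now every element of $\Gamma$ is uniquely $a\,s(g)$ with $a\in A$, $g\in G$, and I define
\[
F\colon \Gamma\to\GmL(V),\qquad F(a\,s(g)):=\lambda(a)\cdot f_g .
\]
One then checks $F$ is a homomorphism: for $a\,s(g)$ and $b\,s(h)$ one has $a\,s(g)\cdot b\,s(h)=a\,(g\ast b)\,\beta(g,h)\,s(gh)$, while on the $\GmL(V)$ side $F(a\,s(g))\circ F(b\,s(h))=\lambda(a)\,f_g\circ\lambda(b)\,f_h=\lambda(a)\,\varphi_g(\lambda(b))\,\alpha(g,h)\,f_{gh}$. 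Using that $\lambda$ is a $G$-homomorphism (so $\varphi_g(\lambda(b))=\lambda(g\ast b)$) and that $\alpha(g,h)=\lambda(\beta(g,h))$, both sides equal $\lambda\!\big(a\,(g\ast b)\,\beta(g,h)\big)\,f_{gh}=F\big(a\,(g\ast b)\,\beta(g,h)\,s(gh)\big)$. Finally $F$ induces $f$ because $F(s(g))=f_g$ maps to $f(g)$ and $A$ is sent into $K^\ast$, which dies in $\PGmL(V)$; more precisely the composite $\Gamma\xrightarrow{F}\GmL(V)\to\PGmL(V)$ factors through $\pi$ and equals $f\circ\pi$.

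\textbf{($\Rightarrow$) Extracting the transgression witness.}
Conversely, suppose a semi-linear lift $F\colon\Gamma\to\GmL(V)$ of $f$ exists, so that $F\circ\pi^{-1}$ agrees with $f$ modulo $K^\ast$; concretely, for each $g$ the element $F(s(g))$ is a representative of $f(g)$, so $F(s(g))=c(g)\,f_g$ for a unique $c(g)\in K^\ast$ (absorbing $c$ into the choice of $f_g$, we may take $F(s(g))=f_g$). Restricting $F$ to $A$ gives a homomorphism $\lambda:=F|_A\colon A\to K^\ast$; since $A$ is central in $\Gamma$ and $F$ is a homomorphism, $\lambda$ is automatically $G$-equivariant for the pulled-back action (conjugating $a$ by $s(g)$ inside $\Gamma$ and applying $F$ shows $F(s(g)\,a\,s(g)^{-1})=\varphi_g(\lambda(a))$, while $s(g)\,a\,s(g)^{-1}=g\ast a$). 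Now evaluate $F$ on the defining relation $s(gh)=\beta(g,h)\,s(g)\,s(h)$: the left side is $f_{gh}=\alpha(g,h)\,(f_g\circ f_h)$ and the right side is $\lambda(\beta(g,h))\,(f_g\circ f_h)$, whence $\alpha(g,h)=\lambda(\beta(g,h))$ for all $g,h$, i.e.\ $\alpha=\lambda\circ\beta$ and $[\alpha]=\tra([\beta\text{-class}])=\tra(\lambda)$ lies in the image of the transgression map.

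\textbf{Main obstacle.}
The computations themselves are routine cocycle manipulations; the one point that deserves care is the bookkeeping of the $G$-action on $K^\ast$ versus the $\Gamma$-action and the normalization choices that let us pass from ``$[\alpha]=[\lambda\circ\beta]$'' to ``$\alpha=\lambda\circ\beta$ on the nose'' without disturbing the hypothesis — that is, verifying that absorbing the $1$-cochain $\tau$ into the representatives $f_g$ is compatible with simultaneously absorbing $c$ into them in the forward direction, and that the resulting $\lambda$ is genuinely a $G$-module homomorphism rather than merely a group homomorphism. Making explicit that centrality of $A$ together with the definition $g\ast a=s(g)\,a\,s(g)^{-1}$ forces $\lambda\in\Hom_G(A,K^\ast)$ is the conceptual heart of the argument, and everything else is a direct translation of the two defining cocycle identities.
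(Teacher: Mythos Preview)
Your approach matches the paper's almost exactly: both directions hinge on writing elements of $\Gamma$ as $a\cdot s(g)$, identifying $F|_A$ with a character $\lambda$, and comparing the two expansions of $F_{s(gh)}$. Two points need fixing, however.

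First, and most importantly, you repeatedly assert that $A$ is central in $\Gamma$. The theorem does \emph{not} assume this: the extension is only by an abelian normal subgroup, and the $G$-action $g\ast a=s(g)as(g)^{-1}$ on $A$ can be nontrivial. Your sentence ``since $A$ is central in $\Gamma$ and $F$ is a homomorphism, $\lambda$ is automatically $G$-equivariant'' is therefore based on a false premise, and in any case centrality would not imply $G$-equivariance (which requires $\lambda(g\ast a)=\varphi_g(\lambda(a))$, a nontrivial constraint even when $g\ast a=a$). Fortunately, the parenthetical computation you give right afterwards --- $F(s(g)\,a\,s(g)^{-1})=f_g\,(\lambda(a)\id)\,f_g^{-1}=\varphi_g(\lambda(a))\id$ while $s(g)\,a\,s(g)^{-1}=g\ast a$ --- is exactly the correct argument and does not use centrality at all. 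Strike the centrality claim (including in your ``Main obstacle'' paragraph) and let the parenthetical stand on its own; this is precisely what the paper does.

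Second, in the $(\Leftarrow)$ direction you have two compensating sign slips: with the convention $s(gh)=\beta(g,h)s(g)s(h)$ one gets $s(g)s(h)=\beta(g,h)^{-1}s(gh)$, so $a\,s(g)\cdot b\,s(h)=a\,(g\ast b)\,\beta(g,h)^{-1}\,s(gh)$; likewise $f_g\circ f_h=\alpha(g,h)^{-1}f_{gh}$. Both errors cancel and the conclusion survives, but the displayed identities as written are incorrect.
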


\begin{proof}
Assume that $f$ is induced by a semi-linear representation $F$. By assumption, there exists a function $\lambda \colon \Gamma \to K^{\ast}$ such that $F_\gamma=\lambda(\gamma)f_{\pi(\gamma)}$ for all $\gamma \in \Gamma$. Since we may assume that $f_1=\id$, it follows that $F_a =\lambda(a) f_{\pi(a)} =\lambda(a) \id$ for all $a \in A$. As a result, the restriction $\lambda_A$ is a homomorphism. 
We claim that $\lambda \in \Hom_G(A,K^{\ast})$, that is $\lambda(g\ast a)= \varphi_g(\lambda(a))$ for all $g \in G$ and $a \in A$, and say that $\lambda$ is \emph{$G$-equivariant}. Now,
\[
\varphi_g(\lambda(a)) \id = F_{s(g)} \circ (\lambda(a) \id) \circ F_{s(g)^{-1}}= F_{s(g)} \circ F_a \circ F_{s(g)^{-1}}= F_{s(g)as(g)^{-1}}= \lambda(g\ast a) \id. 
\]
By using the definition of $\beta$, we compute
\begin{align*}
 F_{s(gh)}=F_{\beta(g,h)s(g)s(h)}  &= F_{\beta(g,h)} \circ F_{s(g)}\circ F_{s(h)}  \\
& =   \lambda(\beta(g,h))  (\lambda(s(g))f_{g}) \circ (\lambda(s(h))f_{h}) \\
& =  \lambda(\beta(g,h))  \lambda(s(g))   \varphi_g(\lambda(s(h))  (f_g \circ f_h).
\end{align*} 
On the other hand,
\begin{align*}
F_{s(gh)} & = \lambda(s(gh))f_{gh} =  \lambda(s(gh))  \alpha(g,h)  (f_g \circ  f_h).
\end{align*}  
Comparing the results, we obtain $\alpha(g,h)= \lambda(\beta(g,h))  \partial(\lambda \circ s)(g,h)$, hence
\[
[\lambda \circ \beta] = [\alpha] \in H_{\varphi}^2(G, K^{\ast}).
\]
Conversely, assume there is a function $\tau \colon G \to K^{\ast}$ and $\lambda \in \Hom_G(A,K^{\ast})$ such that 
\[
\alpha(g,h)= \lambda(\beta(g,h))  \varphi_g(\tau(h))    \tau(gh)^{-1}   \tau(g).
\]
Define
\[
F \colon \Gamma \to \Gamma L(V), \quad a  s(g) \mapsto \lambda(a) \tau(g)f_g,
\]
then one can  show that $F$ is a homomorphism inducing $f$.
\end{proof}

 A natural question arises: is it possible to find for every $G$ together with a given action $\varphi \colon G \to \Aut(K)$ 
an extension 
\[
1 \longrightarrow A \longrightarrow \Gamma \longrightarrow G \longrightarrow 1 \qquad \makebox{with $A$ finite and abelian} 
\]
such that every semi-projective representation $f \colon G \to \PGmL(V)$ with action $\varphi$ is induced by a semi-linear representation $F \colon \Gamma \to  \GmL(V)$?

From Theorem \ref{Lifting}, answering this question amounts to constructing an extension with surjective trangression map
\[
\tra \colon \Hom_G(A,K^{\ast}) \to H_\varphi^2(G,K^{\ast}), \quad \lambda \mapsto [\lambda \circ \beta].
\]
Clearly, this is possible only if $H_\varphi^2(G,K^{\ast})$ is finite. 
In case such an extension $\Gamma$ exists, its order is bounded from below: 
\[
 \vert G \vert  \vert H_\varphi^2(G,K^{\ast})\vert \leq  \vert G \vert  
\vert \Hom(A,K^{\ast}) \vert \leq 
 \vert G \vert  \vert A \vert = \vert \Gamma \vert. 
\]

Unfortunately, $H_\varphi^2(G,K^{\ast})$ is in general not finite. Nevertheless, in  many important situations $H_\varphi^2(G,K^{\ast})$ is  finite: for example, if $K$ is algebraically closed and $\varphi \colon G \to \Aut(K)$ is an arbitrary action, see \cite[Theorem~11.15]{I94} and note that the proof carries over to non-trivial actions and all cohomology groups $H_\varphi^i(G,K^\ast)$, where $i\geq 1$.

\section{Twisted Representation Groups: the Algebraically Closed Case}\label{S3}
 
Throughout this section, we will assume $K$ is algebraically closed and there is a fixed action $\varphi \colon G \to \Aut(K)$. Under these assumptions, we are mainly dealing with a case similar to  $K=\CC$, where  $\varphi$ acts just by the identity and/or complex conjugation. Indeed, $H:=\varphi(G)$ is a finite group and   $F:=K^H \subset K$ is a Galois extension with Galois group $H$. The Artin-Schreier Theorem \cite{AS27} implies that if $H$ is non-trivial, then it is isomorphic to  $\ZZ_2$, $K=F(i)$  with $i^2=-1$ and $\chr(K)=0$.
In particular, if $\chr(K) \neq 0$, then the action is necessarily trivial and we are in the projective setting.\\
The main result of this section is the following. 
\begin{theorem} \label{ConstructionRepGroup}
There exists an extension 
\[
1\longrightarrow A \longrightarrow \Gamma \longrightarrow G \longrightarrow 1
\]
of $G$ with $A$ finite and abelian such that the transgression map 
$\tra \colon \Hom_G(A,K^{\ast}) \to H_\varphi^2(G,K^{\ast})$ is an isomorphism. 
\end{theorem}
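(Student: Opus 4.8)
The strategy is to mimic Isaacs' construction of a representation group in the projective case (cf. \cite[Theorem 11.17]{I94}), replacing ordinary cohomology by the $\varphi$-twisted cohomology throughout. By Proposition~\ref{FinitenessCohomology}, the group $M:=H^2(G,K^{\ast})$ is finite and $B^2(G,K^{\ast})$ has a complement $W$ in $Z^2(G,K^{\ast})$; this $W$ is a finite group isomorphic to $M$. Fix a set of representatives $\{\alpha_1,\ldots,\alpha_m\}\subseteq W$ for $M$, with $\alpha_1$ the normalized trivial cocycle, and form the direct product $\widehat{M}$ of $m$ copies of the group of $|G|$-th roots of unity inside $K^{\ast}$, with a natural $G$-action (componentwise via $\varphi$); actually it is cleaner to take $A:=\Hom(M,\mu)$ or the dual, but concretely I would set $A:=W^{\vee}$, i.e. mimic Isaacs by letting $\Gamma$ be built from a single universal cocycle. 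Precisely: define $\beta\colon G\times G\to A$ to be the ``tautological'' cocycle valued in $A:=W$ (regarded now as an abstract finite abelian group with $G$-action induced by $\varphi$ on $K^{\ast}$), namely $\beta(g,h)$ is the element of $W$ obtained by writing the product structure — here one must be slightly careful, and instead I would follow Isaacs verbatim by taking $A$ to be the character-type dual so that $\beta=\pi_W$, the projection $Z^2\to W$ composed appropriately.

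Concretely, here is the construction I would carry out. Let $W\le Z^2(G,K^{\ast})$ be the finite complement of $B^2(G,K^{\ast})$ from Proposition~\ref{FinitenessCohomology}, so $W\cong M$ as abstract groups (but $W$ carries no intrinsic $G$-action beyond being a set of functions). Set $A:=\Hom(W,K^{\ast})=\widehat{W}$, the Pontryagin-type dual; since $W$ is finite abelian of exponent dividing $|G|$ and $K$ is algebraically closed, $A\cong W\cong M$ is finite abelian, and $G$ acts on $A$ by $(g\ast\lambda)(w):=\varphi_g(\lambda(w))$. Define $\beta\colon G\times G\to A$ by $\beta(g,h):=\big(w\mapsto w(g,h)\big)$, i.e. $\beta(g,h)$ is evaluation-at-$(g,h)$. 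The first thing to check is that $\beta$ is a $2$-cocycle with values in the $G$-module $A$: this follows because each $w\in W$ is itself a $2$-cocycle valued in $K^{\ast}$ with the $\varphi$-action, so the cocycle identity for $\beta$ is just the collection of cocycle identities for all $w\in W$, evaluated. Then let $1\to A\to\Gamma\xrightarrow{\pi} G\to 1$ be the extension determined by $[\beta]\in H^2(G,A)$, with $A$ finite abelian, hence $\Gamma$ finite.

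It remains to show $\tra\colon\Hom_G(A,K^{\ast})\to H^2(G,K^{\ast})=M$ is an isomorphism. For surjectivity: given $[\alpha]\in M$, there is a unique $w\in W$ with $[w]=[\alpha]$; the element $w$, viewed as a character $\mathrm{ev}_w\in\Hom(A,K^{\ast})$ (evaluation of $\lambda\in A=\widehat W$... — more precisely, $W\hookrightarrow\widehat{\widehat W}=\widehat A$ canonically), is $G$-equivariant precisely because $w$ is a $\varphi$-cocycle, i.e. $\mathrm{ev}_w\in\Hom_G(A,K^{\ast})$, and $\tra(\mathrm{ev}_w)=[\mathrm{ev}_w\circ\beta]=[w]=[\alpha]$ by construction of $\beta$. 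For injectivity, I would invoke the inflation-restriction sequence of the Remark preceding Theorem~\ref{Lifting}: $\tra$ is injective iff $\res\colon H^1(\Gamma,K^{\ast})\to\Hom_G(A,K^{\ast})$ is zero, equivalently $\inf\colon H^1(G,K^{\ast})\to H^1(\Gamma,K^{\ast})$ is onto. Since by construction $|\Hom_G(A,K^{\ast})|\le|A|=|M|$ and $\tra$ is already surjective onto $M$, a counting argument forces $|\Hom_G(A,K^{\ast})|=|M|$ and $\tra$ bijective — but establishing $|\Hom_G(A,K^{\ast})|=|M|$ cleanly is the crux. I expect the main obstacle to be exactly this: verifying that the $G$-equivariant characters of $A$ are no more numerous than $M$, i.e. that the ``tautological'' construction does not introduce extra equivariant characters. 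This should follow from the fact that $A=\widehat W$ with $W$ spanned (as a $\ZZ$-module with the evaluation pairing) by genuine $\varphi$-cocycles, so that $\Hom_G(A,K^{\ast})$ is canonically the image of $W$ in $\widehat A$ under evaluation, which is all of it; I would prove this by the double-duality isomorphism $W\cong\widehat{\widehat W}$ together with the observation that the $G$-equivariance condition on $\widehat{\widehat W}$ cuts out precisely those functionals corresponding to cocycles, and all of $W$ consists of cocycles. Once $|\Hom_G(A,K^{\ast})|=|H^2(G,K^{\ast})|$ is in hand, bijectivity of $\tra$ is immediate, and $|\Gamma|=|G|\cdot|A|=|G|\cdot|H^2(G,K^{\ast})|$ gives the minimality claim stated before the theorem.
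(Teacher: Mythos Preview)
Your construction is exactly the paper's: take a finite complement $W$ of $B^2(G,K^{\ast})$ in $Z^2(G,K^{\ast})$, set $A=\Hom(W,K^{\ast})$ with the $G$-action $(g\ast\lambda)(w)=\varphi_g(\lambda(w))$, define the tautological cocycle $\beta(g,h)=\big(w\mapsto w(g,h)\big)$, and build $\Gamma$ from $[\beta]$. Your surjectivity argument via evaluation characters $\mathrm{ev}_w$ is also the paper's (one small remark: the $G$-equivariance of $\mathrm{ev}_w$ follows directly from the definition of the action on $A$, not from $w$ being a cocycle).

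Where you go astray is in the final paragraph: the counting argument you already wrote down \emph{is} the complete proof of injectivity, and it is precisely what the paper does. Surjectivity gives $\lvert H^2(G,K^{\ast})\rvert\le\lvert\Hom_G(A,K^{\ast})\rvert$, and trivially $\lvert\Hom_G(A,K^{\ast})\rvert\le\lvert\Hom(A,K^{\ast})\rvert\le\lvert A\rvert\le\lvert W\rvert=\lvert H^2(G,K^{\ast})\rvert$; hence all inequalities are equalities and $\tra$ is bijective. There is no ``crux'' left to establish, no need for the inflation--restriction sequence, and no need to analyse which functionals on $\widehat{W}$ are equivariant --- the chain of inequalities forces $\Hom_G(A,K^{\ast})=\Hom(A,K^{\ast})$ automatically. (One genuine minor gap: your claim $A\cong W$ requires $\chr(K)\nmid\lvert W\rvert$, which is not immediate from ``$K$ algebraically closed''. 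It holds because any $w\in W$ of order $p=\chr(K)$ would take values in $\mu_p(K)=\{1\}$, hence be trivial; alternatively, just use $\lvert A\rvert\le\lvert W\rvert$ in the chain, which is all you need.)
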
 
	
\begin{proof}
    From \cite[Lemma~11.14]{I94}, we may let $M'$ be a finite complement of $B_\varphi^2(G,K^*)$ in $Z_\varphi^2(G,K^*)$. For each $m'\in M'$, define $m:=(\partial \tau_{m'})m'$, where $\tau_{m'}(g)=\varphi_g(m'(1,1)^{-1})$. Then the set of these $m$ forms a complement $M$ of normalized $2$-cocycles. Define an action on $A:= \Hom(M,K^*)$ via $\varphi$ by $(g * a) (m):= \varphi_g(a(m))$ for all $m\in M$.
    Now define $\beta \colon  G \times G \to A $ by $\beta(g,h)(m):=m(g,h)$ for all $m \in M$.
    A straightforward computation confirms that $\beta\in Z^2(G,A)$ under the action of $G$ on $A$ and clearly $\beta$ is normalized.
    Then it is clear (see \cite[Chapter~IV]{B94})  that $\beta$ defines an extension 
\[
1 \longrightarrow A \longrightarrow \Gamma\longrightarrow G \longrightarrow 1,
\]
where $\Gamma := A \times  G$ with  binary operation $(a,g) \cdot (b, h):= (a (g *b) \beta(g,h), gh)$.
Note that the conjugation action of $G$ on $A$ is given by $g * a$. Now, we claim that  the transgression map
\begin{align*}
    \tra\colon \Hom_G(A, K^*) \to H_\varphi^2(G, K^*),\quad    \lambda \mapsto [\lambda \circ \beta],
\end{align*}
is surjective.
Any class in $H_\varphi^2(G,K^*)$ is  represented by a (unique) element $m_0 \in M \leq Z_\varphi^2(G,K^*)$. 
Consider the evaluation homomorphism at $m_0$, that is 
	\[
   		\lambda \colon A \to K^{\ast}, \quad a \mapsto a(m_0).
	\]
Note that 	$\lambda$ is $G$-equivariant, in fact
  	\[
    	\lambda(g \ast a)=(g\ast a)(m_0) = \varphi_g(a(m_0))=\varphi_g(\lambda(a)) \qquad \makebox{for all}\: g\in G.
    \]
Furthermore, 
 	\[
    	(\lambda\circ \beta)(g,h)=\lambda(\beta(g,h))=\beta(g,h)(m_0)=m_0(g,h)
    \]
and thus $\tra$ is surjective. Finally, $\tra$ is injective because
\begin{equation*} 
    \vert M\vert = \vert H_\varphi^2(G,K^{\ast}) \vert \leq \vert \Hom_G(A,K^{\ast})  \vert \leq \vert \Hom(A,K^{\ast})  \vert  \leq \vert A \vert \leq \vert M \vert. \qedhere
\end{equation*}
\end{proof}

From the above chain of inequalities, it follows that 

    \begin{enumerate}
        \item all characters of $A$ are $G$-equivariant, namely   $\Hom_G(A,K^{\ast}) = \Hom(A,K^{\ast}) $,
        \item $A \cong \Hom(A,K^\ast)$, 
        \item $A \cong H_\varphi^2(G,K^\ast)$,
        \item the  group $\Gamma$ has minimal order $\vert \Gamma \vert= \vert G \vert \vert H_\varphi^2(G,K^{\ast}) \vert $,
        \item 
        $H_\varphi^1(G,K^{\ast}) \cong H_\varphi^1(\Gamma,K^{\ast})$ using the inflation-restriction sequence
        \[
        0 \longrightarrow H_\varphi^1(G,K^{\ast}) \longrightarrow H_\varphi^1(\Gamma,K^{\ast})  \longrightarrow \Hom_G(A,K^{\ast}) \overset{\sim}{\longrightarrow} H_\varphi^2(G,K^{\ast}).
        \]
    \end{enumerate}
    Note that (5) is equivalent to $\tra$ being injective.\\
If $\chr(K)\neq 0$, the action $\varphi$ is trivial. Moreover, property (2) is equivalent to $\chr(K) \nmid \vert A \vert $, which from (3) confirms the known result that  $\chr(K) \nmid \vert H^2(G, K^\ast) \vert$ (see \cite[Theorem~2.3.2]{K85}).\\
 The above observations motivate the following definition:

\begin{definition}\label{RepGroup}
Let $\varphi  \colon G \to  \Aut(K)$  be an action of a finite group $G$ on an algebraically closed field $K$. A 
group $\Gamma$ is called a \emph{$\varphi$-twisted representation group} of $G$ if  there exists an extension 
\[
1 \longrightarrow A \longrightarrow \Gamma \longrightarrow G \longrightarrow 1 \qquad \makebox{with $A$ finite and abelian}
\]
such that the following conditions hold:

\begin{enumerate}
    \item $\chr(K) \nmid \vert A \vert $, 
    \item $\Hom_G(A,K^{\ast})=\Hom(A,K^{\ast})$,
    \item the transgression map
    \[
    \tra \colon \Hom_G(A,K^{\ast}) \to H_\varphi^2(G,K^{\ast})
    \]
    is an isomorphim.
\end{enumerate}

\end{definition}

Next a numerical criterion is given to decide whether an extension is a $\varphi$-twisted representation group.

\begin{proposition}\label{criterion}
 Let 
\[
1 \longrightarrow A \longrightarrow \Gamma \longrightarrow G \longrightarrow 1
\]
be an extension by a finite  abelian group $A$. Then $\Gamma$ is a $\varphi$-twisted representation group of $G$ if and only if the following conditions are satisfied:
\begin{enumerate}
\item
$\vert A \vert= \vert H_\varphi^2(G,K^{\ast}) \vert $, 
\item
$|\Hom_G(A,K^{\ast})|=|\Hom(A,K^{\ast})|$ and
\item
$\vert H_\varphi^1(G,K^{\ast})\vert  =\vert  H_\varphi^1(\Gamma,K^{\ast}) \vert $. 
\end{enumerate}

\end{proposition}

\begin{proof}
Clearly, every $\varphi$-twisted representation group fulfills the three conditions. Conversely, if they hold, then the  inflation-restriction sequence together with (3) implies that $H_\varphi^1(G,K^{\ast})\cong H_\varphi^1(\Gamma,K^{\ast})$ and hence  the transgression map is injective. Condition~(1) implies $\chr(K)\nmid \lvert A\rvert$. Therefore, by using condition (2), we have 
\[
\Hom_G(A,K^{\ast}) =  \Hom(A,K^{\ast}) \cong A. 
\]
Thus, the transgression map  is also surjective. 
\end{proof}

\begin{proposition} \label{Stem}
If $\varphi  \colon G \to  \Aut(K)$ is the trivial action, then an extension as in Definition \ref{RepGroup} is a stem extension.  
\end{proposition}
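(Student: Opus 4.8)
The plan is to verify directly the two defining properties of a stem extension for the extension $1\to A\to\Gamma\to G\to 1$ of Definition \ref{RepGroup}: that $A$ is central in $\Gamma$, and that $A\subseteq[\Gamma,\Gamma]$. The only external inputs I would isolate beforehand are consequences of $K$ being algebraically closed together with $\chr(K)\nmid|A|$ (condition (1)): for every finite abelian group $B$ with $\chr(K)\nmid|B|$, the characters $\Hom(B,K^{\ast})$ separate the points of $B$ (so $\Hom(B,K^{\ast})=0$ only if $B=0$), and $K^{\ast}$ is divisible, hence injective as a $\ZZ$-module, so homomorphisms to $K^{\ast}$ extend along inclusions of abelian groups.

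\emph{Centrality.} Write $\ast$ for the conjugation action of $G$ on $A$. Since $\varphi$ is trivial, the $G$-module structure on $K^{\ast}$ is trivial, and hence $\Hom_G(A,K^{\ast})$ consists precisely of the $\lambda\in\Hom(A,K^{\ast})$ with $\lambda(g\ast a)=\lambda(a)$ for all $g\in G$ and $a\in A$. Condition (2) forces this identity for every character $\lambda$ of $A$; as the characters separate points (here $\chr(K)\nmid|A|$), I conclude $g\ast a=a$ for all $g,a$. Thus a set of coset representatives of $A$ in $\Gamma$ centralises $A$, and since $A$ is abelian this gives $A\subseteq Z(\Gamma)$.

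\emph{The commutator condition.} Next I would feed the extension into the inflation--restriction sequence recalled earlier; with $\varphi$ trivial all coefficient modules are trivial and it begins
\[
1\longrightarrow \Hom(G,K^{\ast})\overset{\inf}{\longrightarrow}\Hom(\Gamma,K^{\ast})\overset{\res}{\longrightarrow}\Hom_G(A,K^{\ast})\overset{\tra}{\longrightarrow}H^2(G,K^{\ast}),
\]
where $\res$ is restriction of homomorphisms to $A$. By condition (3) the map $\tra$ is injective, so exactness yields $\im(\res)=0$: every homomorphism $\Gamma\to K^{\ast}$ is trivial on $A$. Suppose, for contradiction, that some $a\in A$ does not lie in $[\Gamma,\Gamma]$. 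Its image $\bar a$ in the finite abelian group $\Gamma/[\Gamma,\Gamma]$ is then nontrivial, of order $d$ dividing $|A|$, so $\chr(K)\nmid d$ and $K$ contains a primitive $d$-th root of unity $\zeta$. Sending $\bar a\mapsto\zeta$ defines a homomorphism $\langle\bar a\rangle\to K^{\ast}$, which extends to $\Gamma/[\Gamma,\Gamma]\to K^{\ast}$ by injectivity of $K^{\ast}$; inflating along $\Gamma\twoheadrightarrow\Gamma/[\Gamma,\Gamma]$ produces a homomorphism $\Gamma\to K^{\ast}$ with value $\zeta\neq 1$ at $a$, contradicting $\im(\res)=0$. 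Hence $A\subseteq[\Gamma,\Gamma]$, and together with centrality the extension is stem.

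Essentially everything here is routine bookkeeping --- rewriting $\Hom_G$ when $\varphi$ is trivial and specialising the inflation--restriction sequence. The one step that deserves attention is the extension of the character from $\langle\bar a\rangle$ to all of $\Gamma/[\Gamma,\Gamma]$: this is precisely where divisibility (injectivity) of $K^{\ast}$ --- a real use of algebraic closedness of $K$, not merely of $K$ containing enough roots of unity for $A$ --- is needed, and one must check that the relevant order is prime to $\chr(K)$, which is why condition (1) is invoked.
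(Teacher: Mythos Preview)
Your proof is correct and follows essentially the same strategy as the paper's: for centrality you use condition~(2) together with the fact that characters of $A$ separate points, and for $A\subseteq[\Gamma,\Gamma]$ you use the inflation--restriction sequence to see that $\res$ is zero and then manufacture a character of $\Gamma$ nontrivial on $A$ to reach a contradiction. The only cosmetic difference is in that last step: the paper decomposes $\Gamma^{ab}$ into cyclic factors and projects, handling the characteristic by splitting off the $p$-part, whereas you observe directly that the order of $\bar a$ divides $|A|$ (hence is prime to $\chr(K)$) and then invoke injectivity of the divisible group $K^{\ast}$ to extend the character --- a slightly cleaner packaging of the same idea.
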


\begin{proof}
Since $\varphi$ is trivial, the inflation-restriction sequence is 
\[
1 \longrightarrow \Hom(G,K^{\ast}) \longrightarrow \Hom(\Gamma,K^{\ast})  \longrightarrow \Hom(A,K^{\ast}) \longrightarrow H^2(G,K^{\ast}).
\]
As the transgression map is an isomorphism,  restriction  $\Hom(\Gamma,K^{\ast})  \to \Hom(A,K^{\ast})$ is trivial. Let $H_\Gamma:=\Hom(\Gamma^{ab},K^\ast)$ and $H_A:=\Hom(A,K^\ast)$. Then $\cap_{\bar{\lambda} \in H_\Gamma}\ker(\bar{\lambda})$ is the Sylow $p$-subgroup of $\Gamma^{ab}$ for $p:=\chr(K) >0$ and is trivial for $p=0$. Now, $\lambda_{A}$ is trivial for each lift of $\bar{\lambda}$ to $\Gamma$ and hence $A\leq [\Gamma,\Gamma]$ since $p\nmid \lvert A\rvert$. Similarly, for $a\in A$, $\lambda(s(g)as(g)^{-1})=\lambda(a)$ for all $\lambda\in H_A$ and all $g\in G$. Thus $s(g)as(g)^{-1}a^{-1}\in \cap_{\lambda\in H_A}\ker(\lambda)$, which is trivial. Hence $A\leq Z(\Gamma)$.
\end{proof}

 The next proposition shows that Definition~\ref{RepGroup} is an extension of the definition of a representation group for $\varphi$ trivial.
\begin{proposition}
When the $G$-action  on $K$ is trivial,  Definition \ref{RepGroup} is equivalent  to the classical definition of a representation group (see \cite[Corollary 11.20]{I94}):
\begin{enumerate}
    \item the extension 
        $    1 \longrightarrow A \longrightarrow \Gamma \longrightarrow G \longrightarrow 1
        $
        is stem,
    \item
    $|A|=|H^2(G,K^\ast)|$.
\end{enumerate}

\end{proposition}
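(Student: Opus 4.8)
The plan is to prove the two implications separately, relying on the cohomological characterization in Definition~\ref{RepGroup} together with Proposition~\ref{Stem}, and conversely on the classical fact that a representation group satisfies the transgression condition.

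\textbf{From Definition~\ref{RepGroup} to the classical notion.} Assume $\varphi$ is trivial and that $\Gamma$ satisfies conditions (1)--(3) of Definition~\ref{RepGroup}. Proposition~\ref{Stem} already tells us the extension is stem, so statement (1) is free. For statement (2), I would argue as follows: since the transgression map $\tra\colon \Hom_G(A,K^\ast)\to H^2(G,K^\ast)$ is an isomorphism and, by condition (2), $\Hom_G(A,K^\ast)=\Hom(A,K^\ast)$, we get $|H^2(G,K^\ast)|=|\Hom(A,K^\ast)|$. It remains to observe that $|\Hom(A,K^\ast)|=|A|$ for a finite abelian group $A$ with $\chr(K)\nmid |A|$: indeed, $K^\ast$ contains a primitive $d$-th root of unity for every $d$ dividing $|A|$ (as $K$ is algebraically closed and the relevant cyclotomic polynomials are separable over $K$), so $\Hom(A,K^\ast)\simeq A$ by the usual decomposition of $A$ into cyclic factors. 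This gives $|A|=|H^2(G,K^\ast)|$.

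\textbf{From the classical notion to Definition~\ref{RepGroup}.} Conversely, suppose $1\to A\to \Gamma\to G\to 1$ is a stem extension with $|A|=|H^2(G,K^\ast)|$ and $\varphi$ trivial. First, $\chr(K)\nmid |A|$: by Remark~\ref{Consequences}(2) we know $\chr(K)\nmid|H^2(G,K^\ast)|$, and $|A|=|H^2(G,K^\ast)|$, so condition (1) holds. Condition (2) is automatic when $\varphi$ is trivial, since then $\Hom_G(A,K^\ast)$ is defined by the condition $\lambda(g\ast a)=\lambda(a)$, i.e.\ $\lambda$ kills all commutators $\gamma a\gamma^{-1}a^{-1}$; but these commutators lie in $[\Gamma,\Gamma]\cap A$, and since the extension is stem with $A$ central, $g\ast a = a$ for all $g$, so every character of $A$ is $G$-equivariant. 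For condition (3), I would invoke the inflation-restriction sequence: since $A$ is central and $A\leq[\Gamma,\Gamma]$, the restriction map $\Hom(\Gamma,K^\ast)\to\Hom(A,K^\ast)$ is trivial (a homomorphism $\Gamma\to K^\ast$ factors through $\Gamma^{ab}$ and hence kills $A\leq[\Gamma,\Gamma]$), so exactness forces $\tra$ to be injective. Combined with $|\Hom(A,K^\ast)|=|A|=|H^2(G,K^\ast)|$ (using again $\chr(K)\nmid|A|$ and $K$ algebraically closed), injectivity upgrades to an isomorphism, giving condition (3).

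\textbf{Main obstacle.} The only real content beyond bookkeeping is the identity $|\Hom(A,K^\ast)|=|A|$, which requires $\chr(K)\nmid|A|$ and $K$ algebraically closed; in the forward direction this is a hypothesis, while in the reverse direction it must be extracted from Remark~\ref{Consequences}(2). I also need to be careful that the argument for condition (2) genuinely uses the stem hypothesis (centrality of $A$) rather than circularly invoking what is being proved; stating it as "$A$ central $\Rightarrow$ $g\ast a=a$ $\Rightarrow$ all characters are $G$-equivariant" makes the logic clean. Everything else is a direct translation between the two sets of conditions via the inflation-restriction sequence already recorded in the excerpt.
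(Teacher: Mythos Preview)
Your proof is correct and follows essentially the same route as the paper's: both directions use Proposition~\ref{Stem} for the stem conclusion, the identity $|\Hom(A,K^\ast)|=|A|$ (from $\chr(K)\nmid|A|$ and $K$ algebraically closed), centrality of $A$ to get $\Hom_G(A,K^\ast)=\Hom(A,K^\ast)$, and the inflation-restriction sequence with $A\leq[\Gamma,\Gamma]$ to force injectivity of $\tra$, upgraded to an isomorphism by the cardinality count. Your version is slightly more explicit in a few places (e.g.\ spelling out why $|\Hom(A,K^\ast)|=|A|$ and why restriction kills $A$), but there is no substantive difference in strategy.
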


\begin{proof}
 Suppose the extension satisfies the conditions of Definition~\ref{RepGroup}, then Proposition \ref{Stem} implies that it is  stem and (2) is obviously true. Conversely, suppose we have a stem extension 
 \[
    1 \longrightarrow A \longrightarrow \Gamma \longrightarrow G \longrightarrow 1
 \]
 such that $|A|=|H^2(G,K^\ast)|$. Then
 $\chr(K) \nmid |H^2(G,K^\ast)|$ as previously noted. Next $\Hom_G(A,K^{\ast})=\Hom(A,K^{\ast})$ since $A\leq Z(\Gamma)$. Furthermore, the inflation-restriction sequence yields that the transgression map is injective since $A\leq [\Gamma,\Gamma]$, and hence $\Hom(G,K^\ast)\cong \Hom(\Gamma,K^\ast)$.  Finally, $\Hom(A,K^\ast)\cong A$ since $\chr(K)\nmid\lvert A\rvert$ and so the transgression map is also surjective by (2).
\end{proof}

Note that the order of a $\varphi$-twisted representation group $\Gamma$ is unique, whereas the group itself is in general not (see examples in Section \ref{ExSection}), even in the projective case.

\section{Examples}\label{ExSection}

\begin{example}
Consider  $K=\CC$  as a $G=\ZZ_2$-module, where $1\in \ZZ_2$ acts  via  complex conjugation $\varphi(1)(z)=\conj(z)= \overline{z}$. 
In this example,  a twisted representation group  $\Gamma$ is of order $4$ because 
\[
H_\varphi^2(\ZZ_2,\CC^{\ast}) \cong 
(\CC^{\ast})^{\ZZ_2}/  N_{\CC/\RR}(\CC^{\ast}) \cong \RR^\ast/\RR^+\cong  \ZZ_2.
\] 
The transgression map is required to be  an isomorphism, so the 
  extension 
 \[
 0 \longrightarrow \ZZ_2 \longrightarrow \Gamma \longrightarrow \ZZ_2 \longrightarrow 0 
 \]
 has to be non-split, which  implies $\Gamma \cong \ZZ_4$.  
Consider the semi-projective representation 
\[ 
f \colon \ZZ_2 \to \PGL(2,\CC)\rtimes\ZZ_2, \quad 1 \mapsto \bigg( \bigg[\begin{pmatrix} 0 & -1 \\ 1 & 0 \end{pmatrix}\bigg], \conj\bigg).
\]
Its  cohomology class in $H_{\varphi}^2(\ZZ_2, \CC^*)$ is represented by the  normalised $2$-cocycle $\alpha$  with  
$\alpha(1,1)=-1$. 
It has no lift to a semi-linear representation of $\ZZ_2$ but a semi-linear  lift to $\Gamma$ is given by 
\[ 
F \colon \ZZ_4 \to \GL(2,\CC)\rtimes\ZZ_2, \quad 1 \mapsto \bigg( \begin{pmatrix} 0 & -1 \\ 1 & 0 \end{pmatrix}, \conj \bigg).
\]
\end{example}

In the following, we explain  how to use a computer algebra system, such as  MAGMA \cite{BCP97}, to produce all  twisted representation groups of a given finite group $G$  in the case $K=\CC$. We assume that $\varphi\colon G\to \Aut(\CC)$ takes values in $\Gal(\CC/\RR)\cong \lbrace \id, \conj\rbrace$. \\
The conditions in Proposition~\ref{criterion} need to be satisfied. Since we want to use  a computer, it is necessary to replace 
the module $\CC^{\ast}$ by a discrete module.
Identifying complex conjugation with multiplication by $-1$, the homomorphism $\varphi$ induces an action of $G$ on $\ZZ$ that is  also  denoted  by $\varphi$. 
In this way, we can consider   $\varphi$  as a complex character of $G$ of degree $1$ with values in  $\{\pm1\}$. Furthermore,
the exponential sequence
\[
\xymatrixcolsep{2pc}
\xymatrix{
0 \ar[r] & \ZZ \ar[r]^{\cdot 2\pi i } & \CC \ar[r]^{\exp} & \CC^\ast \ar[r] & 1
}
\]
 becomes a sequence of $G$-modules. Since the cohomology groups $H_\varphi^n(G,\CC)$ vanish for $n\geq 1$, see \cite[Corollary III.10.2]{B94},  
the corresponding long exact  sequence induces isomorphisms
\[
    H_\varphi^n(G,\CC^\ast)\cong H_\varphi^{n+1}(G,\ZZ) \qquad \makebox{for all} \quad   n\geq 1.
\]
Similarly, these isomorphisms hold for the cohomology groups of $\Gamma$.\\
These considerations lead to the MAGMA implementation on the webpage 
\begin{center}
\url{http://www.staff.uni-bayreuth.de/~bt300503/publi.html}.
\end{center}

 It takes as inputs $G$ and $\varphi$, which is given as a character with values in $\lbrace \pm 1 \rbrace$, and it returns all $\varphi$-twisted representation groups of $G$.

\begin{example}
Running  our code, we compute the $\varphi$-twisted representation groups of the \emph{dihedral group}
\[
\mathcal D_4=\langle s,t \mid s^2=t^4=1,\:sts^{-1}=t^3\rangle
\]
for all possible actions $\varphi\colon \mathcal D_4\to\Aut(\CC)$:\\
\begin{center}
\begin{tabular}{c|c|c|c}
     $\varphi(s)$ & $\varphi(t)$ & $A=H_\varphi^2(\mathcal D_4,\CC^*)$ & $\varphi$-twisted representation groups\\
     \hline
    1 & 1 & $\ZZ_2$  & $\langle 16,7\rangle, \: \langle 16,8\rangle,\: \langle 16,9\rangle$\\
    -1 & -1 &$\ZZ_2\times\ZZ_2$ & $\langle 32, 14\rangle,\: \langle 32, 13\rangle$\\
    1  & -1 &$\ZZ_2\times\ZZ_2$ & $\langle 32, 9\rangle,\: \langle 32, 10\rangle,\: \langle 32, 14\rangle,\: \langle 32, 13\rangle$\\
    -1 & 1 &$\ZZ_2\times\ZZ_2$& $\langle 32, 2\rangle,\: \langle 32, 10\rangle ,\: \langle 32, 13\rangle$
\end{tabular}
\end{center}

Here, the symbol $\langle n,d\rangle$ denotes  the $d$-th group of order $n$ in MAGMA's \emph{Database of Small Groups}.

\end{example}

\begin{bibdiv}
\begin{biblist}

\bib{AS27}{article}{
   author={Artin, E.},
   author={Schreier, O.},
   title={Eine Kennzeichnung der reell abgeschlossenen K\"{o}rper},
   journal={Abh. Math. Sem. Univ. Hamburg},
   volume={5},
   date={1927},
   number={1},
   pages={225--231},
   issn={0025-5858},
   doi={10.1007/BF02952522},
}

\bib{BCP97}{article}{
   author={Bosma, W.},
   author={Cannon, J.},
   author={Playoust, C.},
   title={The Magma algebra system. I. The user language},
   note={Computational algebra and number theory (London, 1993)},
   journal={J. Symbolic Comput.},
   volume={24},
   date={1997},
   number={3-4},
   pages={235--265},
   issn={0747-7171},
   doi={10.1006/jsco.1996.0125},
}
\bib{B94}{book}{
   author={Brown, K. S.},
   title={Cohomology of groups},
   series={Graduate Texts in Mathematics},
   volume={87},
   note={Corrected reprint of the 1982 original},
   publisher={Springer-Verlag, New York},
   date={1994},
   pages={x+306},
   isbn={0-387-90688-6},
}

\bib{DG22}{article}{
   author = {Demleitner, A.},
   author = {Gleissner, C.},
    title = {The Classification of Rigid Hyperelliptic Fourfolds},
journal = {Annali di Matematica Pura ed Applicata (1923 -)},
    date = {2022},
    doi = {10.1007/s10231-022-01287-7},
    url = {https://doi.org/10.1007/s10231-022-01287-7},
}

\bib{HoSe53}{article}{
   author={Hochschild, G.},
   author={Serre, J.-P.},
   title={Cohomology of group extensions},
   journal={Trans. Amer. Math. Soc.},
   volume={74},
   date={1953},
   pages={110--134},
   issn={0002-9947},
   doi={10.2307/1990851},
}

\bib{I81}{article}{
   author={Isaacs, I. M.},
   title={Extensions of group representations over arbitrary fields},
   journal={J. Algebra},
   volume={68},
   date={1981},
   number={1},
   pages={54--74},
   issn={0021-8693},
   doi={10.1016/0021-8693(81)90284-2},
}

\bib{I94}{book}{
   author={Isaacs, I. M.},
   title={Character theory of finite groups},
   note={Corrected reprint of the 1976 original},
   publisher={Dover Publications, Inc., New York},
   date={1994},
   pages={xii+303},
   isbn={0-486-68014-2},
}

\bib{K85}{book}{
   author={Karpilovsky, G.},
   title={Projective representations of finite groups},
   series={Monographs and Textbooks in Pure and Applied Mathematics},
   volume={94},
   publisher={Marcel Dekker, Inc., New York},
   date={1985},
   pages={xiii+644},
   isbn={0-8247-7313-6},
}

\bib{S04}{article}{
   author={Schur, I.},
   title={\"{U}ber die Darstellung der endlichen Gruppen durch gebrochen lineare
   Substitutionen},
   journal={J. Reine Angew. Math.},
   volume={127},
   date={1904},
   pages={20--50},
   issn={0075-4102},
   doi={10.1515/crll.1904.127.20},
}

\end{biblist}
\end{bibdiv}

 \end{document}